\newtheorem{theorem}{Theorem}[section]
\newtheorem{lemma}[theorem]{Lemma}
\newtheorem{proposition}[theorem]{Proposition}
\newtheorem{corollary}[theorem]{Corollary}
\newtheorem{problem}{Question}
\newenvironment{definition}[1][Definition]{\begin{trivlist}
\item[\hskip \labelsep {\bfseries #1}]}{\end{trivlist}}
\title{The Sarason Sub-Symbol and the Recovery of the Symbol of Densely Defined Toeplitz Operators over the Hardy Space}
\author{Joel A. Rosenfeld}
\begin{document}

\begin{abstract}While the symbol map for the collection of bounded Toeplitz operators is well studied,  there has been little work on a symbol map for densely defined Toeplitz operators. In this work a family of candidate symbols, the Sarason Sub-Symbols, is introduced as a means of reproducing the symbol of a densely defined Toeplitz operator. This leads to a partial answer to a question posed by Donald Sarason in 2008. In the bounded case the Toeplitzness of an operator can be classified in terms of its Sarason Sub-Symbols. This justifies the investigation into the application of the Sarason Sub-Symbols on densely defined operators. It is shown that analytic closed densely defined Toeplitz operators are completely determined by their Sarason Sub-Symbols, and it is shown for a broader class of operators that they extend closed densely defined Toeplitz operators (of multiplication type).\end{abstract}

\maketitle

\section{Introduction}

The study of bounded Toeplitz operators over the Hardy space $H^2(\mathbb{T})$ is a well developed subject where there are several equivalent definitions of a Toeplitz operator. The simplest definition of a bounded Toeplitz operator is an extension of the definition of a Toeplitz matrix. In this case an operator, $T$, is called a Toeplitz operator if the matrix representation of the operator, with respect to the orthonormal basis $\{ e^{in\theta} \}_{n=0}^\infty$ is constant along the diagonals. Algebraically, this can be represented as $S^* T S = T$. Here $S=M_z$ is the shift operator for the Hardy space. If the coeficients corresponding to each diagonal of the matrix are the Fourier coeficients of a function $\phi \in L^\infty(\mathbb{T})$, then $T = P_{H^2(\mathbb{T})} M_\phi$. Here $P_{H^2(\mathbb{T})}$ is the projection from $L^2(\mathbb{T})\to H^2(\mathbb{T})$, and $M_\phi$ is the bounded multiplication operator from $H^2(\mathbb{T}) \to L^2(\mathbb{T})$ given by $M_\phi f = \phi f$. Finally the converse is true, the bounded operator given by $T_\phi = P_{H^2(\mathbb{T})} M_\phi$ with $\phi \in L^\infty(\mathbb{T})$ satisfies $S^* T S = T$.

When the bounded condition is relaxed to closed and densely defined, the corresponding definitions of Toeplitz operators are no longer equivalent. For instance, if the coefficients of an upper triangular matrix are the coefficients of a Smirnov class function, $\phi \in N^+$, then the operator defined by the closure of this matrix, call it $T$, is densely defined, and the operator is the adjoint of a densely defined multiplication operator (an analytic Toeplitz operator) $M_\phi$. Unlike its bounded counterpart, $T$ can not be represented by a multiplication operator as $P M_{\bar \phi}$ since its domain is strictly larger than the domain of $M_{\bar\phi}$. The operator $T$ does satisfy the following algebraic equations:
\begin{enumerate}
\item $D(T)$ is $S$-invariant,
\item $S^* T S = T$, and
\item If $f \in D(T)$ and $f(0)=0$, then $S^*f \in D(T)$.
\end{enumerate}

These can be seen as the densely defined analogue of the algebraic condition for bounded Toeplitz operators. Therefore $T$ satisfies the algebraic conditions for being a Toeplitz operator, but is not a Toeplitz operator in the multiplication sense. However, $T$ is a closed extension of a multiplication type Toeptliz operator. At the close of \cite{sarason2008} the following problem was posed:

\begin{problem}Is it possible to characterize those closed densely defined operators $T$ on $H^2(\mathbb{T})$ with the above three properties? Moreover, is every closed densely defined operator on $H^2(\mathbb{T})$ that satisfies these conditions determined in some sense by a symbol?\end{problem}

This paper aims to address the second half of this question. If a closed densely defined operator, $T$, satisfies the three algebraic conditions above, henceforth a \emph{Sarason-Toeptliz} operator, then is $T$ the extension of an operator of the form $P M_\phi$ where $M_\phi$ is a densely defined multiplication operator from $H^2$ to $L^2$?

For bounded Toeplitz operators the recovery of the symbol of a Toeplitz operator can be achieved through the symbol map on $\mathcal{T}$, the algebra of generated by the collection of Toeplitz operators in $\mathcal{L}(H^2)$. Douglas demonstrated that there is a unique multiplicative mapping, $\phi$ from $\mathcal{T}$ to $L^\infty$ such that $\phi(T_f T_g) = \phi(T_f)\phi(T_g) = fg$ \cite{Douglas,axler}. This fact was proven again in \cite{barriahalmos} by Halmos and Barria using the limits along the diagonals of a Toeplitz matrix in order to find the symbol in $L^\infty$.

The Hardy space can be identified with analytic functions of the disc $\mathbb{D}$ such that the Taylor coefficients of these functions are square summable. By this viewpoint, $H^2$ is a reproducing kernel Hilbert space (RKHS) over $\mathbb{D}$ with the kernel functions $k_w(z) = (1-\bar w z)^{-1}$ for $|w| < 1$.

In the case of bounded Toeplitz operators, the Berezin transform, a tool particular to the study of RKHSs, is sufficient for the recovery of the of $L^\infty$ functions via radial limits of the Berezin transform of a bounded Toeplitz operator \cite{englis1995}. However, in more general cases the recovery of the symbol of a Sarason-Toeplitz operator is no longer clear.  The recovery of the symbol of a densely defined analytic (or a co-analytic) Toeplitz operator with symbol $\phi$ can be accomplished by the use of the Berezin transform. In this case, the adjoint of an analytic Toeplitz operator has the reproducing kernels as eigenvectors, $k_z$, with eigenvalues $\overline{\phi(z)}$ \cite{sarason2008}. Thus $$\tilde T(z) = (1-|z|^2)\langle k_z,T^* k_z\rangle = (1-|z|^2) \langle k_z, \overline{\phi(z)}k_z \rangle = \phi(z).$$The application of the Berezin transform requires the kernel functions $k_w(z) = (1-\bar w z)^{-1}$ to be in the domain of a operator or in the domain of its adjoint. Thus, the investigation of a new method is justified for the recovery of the symbol of a densely defined Sarason-Toeplitz operator.

We introduce the Sarason Sub-Symbol, which depends on a choice of a function in $D(T)$, as a family of symbol maps for Sarason-Toeplitz operators. In the development, it will be demonstrated that for the bounded case the Sarason Sub-Symbol is unique iff the operator is Toeplitz. Thus the uniqueness of the Sarason Sub-Symbol provides another equivalent definition for a bounded Toeplitz operator. Subsequently it is demonstrated that the Sarason Sub-Symbol for an analytic Toeplitz operator is unique and determines the operator. The rest of the paper is concerned with classes of Toeplitz operators for which the existence of the Sarason Sub-Symbol can be established, and it demonstrates sufficient conditions to show that $T$ is a closed extension of a multiplication type Toeplitz operator.

\section{The Problem of Sarason}

For bounded Toeplitz operators, the Sarason Problem has been long settled \cite{hw,Douglas}. Indeed, if a Toeplitz operator is bounded, then it can be represented by an $L^\infty$ function. Suarez characterized all closed densely defined operators on $H^2(\mathbb{T})$ that commute with the adjoint of the shift operator \cite{Suarez}, and Sarason gives a different treatment of operators that commute with the shift operator, the so called analytic Toeplitz operators. Both of these collections of operators satisfy the Sarason-Toeplitz condition. In addition, the analytic Toeplitz operators are precisely the operators of multiplication by an function in the Smirnov class, $N^+$ \cite{sarason2008}. Suarez's operators are the adjoints of these analytic Toeplitz operators and are called co-analytic Toeplitz operators \cite{Suarez}. Thus the above classes of Sarason-Toeplitz operators are completely characterized by a symbol.

Analytic and co-analytic Toeplitz operators both satisfy the Sarason conditions. The following propery generalizes this relationship.

\begin{proposition}If $T$ is a Sarason-Toeplitz operator then so is $T^*$.\end{proposition}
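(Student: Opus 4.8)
\emph{Proof plan.} The plan is to verify directly that $T^*$ satisfies the three defining conditions, starting from the fact that $T^*$ is automatically closed and densely defined because $T$ is. The main tool will be an identity extracted from condition (2) for $T$: writing any $w\in H^2$ as $w=\langle w,1\rangle\,1+S(S^*w)$ and applying $S^*TS=T$ to $w=T(Sf)$ for $f\in D(T)$ (legitimate since $Sf\in D(T)$ by (1)) gives $S(Tf)=T(Sf)-\langle T(Sf),1\rangle\,1$. Using this, condition (3) for $T^*$ comes almost for free: if $g\in D(T^*)$ with $g(0)=0$, then for $f\in D(T)$ we get $\langle Tf,S^*g\rangle=\langle S(Tf),g\rangle=\langle T(Sf),g\rangle-\langle T(Sf),1\rangle\langle 1,g\rangle=\langle T(Sf),g\rangle$, the last term vanishing because $g(0)=0$; since $Sf\in D(T)$ and $g\in D(T^*)$ this equals $\langle Sf,T^*g\rangle=\langle f,S^*T^*g\rangle$, which is bounded in $f$, so $S^*g\in D(T^*)$ (and incidentally $T^*S^*g=S^*T^*g$).

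Next I would tackle condition (1) for $T^*$, which I expect to be \emph{the main obstacle}. Given $g\in D(T^*)$ I must show $Sg\in D(T^*)$, i.e.\ that $f\mapsto\langle Tf,Sg\rangle=\langle S^*(Tf),g\rangle$ is bounded on $D(T)$. For $f\in D(T)$ with $f(0)=0$ this is easy: by (3), $S^*f\in D(T)$ and $S(S^*f)=f$, so by (2) $S^*(Tf)=S^*TS(S^*f)=T(S^*f)$, whence $\langle S^*(Tf),g\rangle=\langle S^*f,T^*g\rangle$ is bounded by $\|T^*g\|\,\|f\|$. The subtlety is that a general $f\in D(T)$ need not vanish at $0$, and $D(T)$ need not contain the constant function, so one cannot simply subtract off $f(0)\cdot 1$. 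This is resolved by a density argument: since $D(T)$ is dense and $zH^2$ is a proper closed subspace of $H^2$, $D(T)$ is not contained in $zH^2$, so we may fix once and for all a $u\in D(T)$ with $u(0)\neq 0$. For arbitrary $f\in D(T)$ one applies the easy case to $\tilde f:=f-\frac{f(0)}{u(0)}u\in D(T)$ (which vanishes at $0$) and estimates the remaining term $\frac{f(0)}{u(0)}\langle S^*(Tu),g\rangle$ crudely via $|f(0)|\le\|f\|$; this yields $|\langle Tf,Sg\rangle|\le C\|f\|$ with $C$ depending only on $T,g,u$, so $Sg\in D(T^*)$.

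Finally, condition (2) for $T^*$ should follow formally once condition (1) for $T^*$ is in hand. From the workhorse identity, $T(Sf)=S(Tf)+\langle T(Sf),1\rangle\,1$, so for $f\in D(T)$ and any $g$, $\langle T(Sf),Sg\rangle=\langle S(Tf),Sg\rangle+\langle T(Sf),1\rangle\langle 1,Sg\rangle=\langle Tf,g\rangle$, since $S$ is an isometry and $(Sg)(0)=0$. If $g\in D(T^*)$, then $Sg\in D(T^*)$ by (1), and $\langle Tf,g\rangle=\langle T(Sf),Sg\rangle=\langle Sf,T^*(Sg)\rangle=\langle f,S^*T^*Sg\rangle$, which both identifies $T^*g=S^*T^*Sg$ and, read the other way, shows $\{g:Sg\in D(T^*)\}\subseteq D(T^*)$; together with (1) for $T^*$ this gives $S^*T^*S=T^*$ as an operator identity. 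Thus the only point requiring genuine work is condition (1) for $T^*$ — equivalently, the fact that $(S^*TS)^*=S^*T^*S$ is not automatic for the non-surjective operator $S$ and must be forced by the density of $D(T)$.
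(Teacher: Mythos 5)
Your argument is correct and takes essentially the same route as the paper: both verify the three conditions directly, proving shift invariance of $D(T^*)$ by splitting an arbitrary $f\in D(T)$ into a multiple of a fixed element of $D(T)$ with nonzero value at $0$ plus an element of $zD(T)$ (where $S^*TS=T$ and the isometry of $S$ give the bound), and both obtain condition (3) and $S^*T^*S=T^*$ from the same adjoint manipulations. Your write-up is merely a bit more explicit than the paper's on two minor points: the boundedness of the splitting (via $|f(0)|\le\|f\|$) and the equality of domains in the operator identity $S^*T^*S=T^*$.
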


\begin{proof}$T$ is a closed densely defined operator, which means that $T^*$ is closed and densely defined as well. Thus $D(T^*)$ is nonempty.

To demonstrate that $T^*$ has a shift invariant domain, take $g\in D(T^*)$. By definition this means that $\tilde L(f) = \langle Tf, g \rangle$ is a continuous functional. In order to show that $zg \in D(T^*)$ it must be established that $L(f) = \langle Tf, zg \rangle$ is continuous. Note that $zD(T) \subset D(T)$, and $zD(T)$ has co-dimension 1 in $D(T)$. Thus there exists an $f_0 \in D(T)$ such that $$D(T) = \mathbb{C} \{ f_0 \} \oplus zD(T).$$ The functional $L$ is continouous on $\mathbb{C}\{f_0\}$, since it is finite dimensional. Therefore it suffices to show that $L$ is continuous on $zD(T)$. If $f = zh$ for some $h \in D(T)$, then $$L(f) = L(zh) = \langle Tzh, zg \rangle = \langle Th, g \rangle = \tilde L(h).$$ Thus $L$ is continous on $zD(T)$, since $\tilde L$ is continuous on $D(T)$.

Now suppose that $g \in D(T^*)$ and $g(0) = 0$, and consider the functional $L_2(f) = \langle Tf, S^*g \rangle$ defined for $f \in D(T)$. This functional can be rewritten as $$L_2(f) = \langle S^*TSf, S^*g \rangle = \langle TSf, g \rangle := \tilde L_2(Sf).$$ It follows that $L_2(f)$ is continuous, since $\tilde L_2(Sf)$ is continuous with respect to $f$.

Finally for all $f \in D(T^*)$ and $g \in D(T)$ we have,
$$\langle T^*f, g \rangle = \langle f, Tg \rangle = \langle f, S^* T S g \rangle = \langle S^*T^*S f, g \rangle,$$ which yields the second condition. 
\end{proof}

\section{The Sarason Sub-Symbol}

While the Berezin transform can be applied to recover the symbol of densely defined analytic and co-analytic Toeplitz operators, it is not clear if it can be used to recover the symbol of more general densely defined Toeplitz operators. This is because the functions $k_z$ a required to be in the domain of either the operator or the adjoint of the operator for the Berezin transform to be well defined. Instead, in this section the Sarason Sub-Symbol will be introduced as a candidate for the recovery of the symbol of densely defined Sarason-Toeplitz operators.

As a motivating example for the defintion of the Sarason Sub-Symbol, first suppose that $T$ is a bounded Toeplitz operator with symbol $\phi \in L^\infty$. In this case $$a_n = \left\{ \begin{array}{cc}\langle T1, z^n \rangle & n \ge 0\\ \langle Tz^n, 1 \rangle & n < 0 \end{array}\right.$$ are the Fourier coefficients of $\phi$. Thus $\phi$ can be reconstructed as follows $$\phi(e^{i\theta}) = \sum_{n=1}^\infty \langle Tz^n, 1 \rangle e^{-in\theta} + \sum_{n=0}^\infty \langle T1, z^n \rangle e^{in\theta}.$$

While it is not expected that $1 \in D(T)$ in general, given any function $f\in D(T)$ the domain of the densely defined operator $TM_f$ contains the polynomials, since $D(T)$ is shift invariant. The Sarason Sub-Symbol is defined as follows:
\begin{definition}Let $T$ be an operator with a shift invariant domain $D(T)$. For $f \in D(T)\setminus \{0\}$ the Sarason Sub-Symbol corresponding to $f$ is given by $R_f = h_f/f$ where $$h_f = \sum_{n=1}^\infty \langle Tfz^n, 1 \rangle e^{-in\theta} + \sum_{n=0}^\infty \langle Tf, z^n \rangle e^{in\theta}$$ where this series is convergent in some sense. The partial Sarason Sub-Symbol corresponding to $f$ is given by $R_{f,N} = h_{f,N}/f$ where $$h_{f,N} = \sum_{n=1}^N \langle Tfz^n, 1 \rangle e^{-in\theta} + \sum_{n=0}^\infty \langle Tf, z^n \rangle e^{in\theta}.$$\end{definition}

Heuristically, if $T$ is a Toeplitz operator associated with multiplication by the symbol $\phi$, then $h_f = \phi \cdot f$. The question of well definedness of the Sarason Sub-Symbol depends on the convergence of the series contained in the definition of $h_f$. When $\phi \in L^\infty$, $h_f = \phi \cdot f$, and is a well defined function in $L^2$. More specifically we can characterize all bounded Toeplitz operators by means of the Sarason Sub-Symbol.

\begin{proposition}\label{bddToep}Let $V$ be a bounded operator on $H^2$. The operator $V$ is a Toeplitz operator iff the Sarason Sub-Symbol is independent of the choice of $f \in H^2$.\end{proposition}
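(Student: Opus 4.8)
The plan is to prove both implications by unwinding the series defining $h_f$ into identities among the matrix entries $\langle Vz^j,z^k\rangle$, and then to appeal to the equivalent descriptions of a bounded Toeplitz operator recalled in the Introduction (constancy along the diagonals $\iff S^*VS=V\iff V=T_\phi$ with $\phi\in L^\infty$). Throughout I will use that the analytic (non-negative frequency) part of the series defining $h_f$ is literally $Vf$, since $Vf\in H^2$ and $\sum_{n\ge0}\langle Vf,z^n\rangle e^{in\theta}$ is just its Fourier expansion.

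\textbf{Forward implication.} Here I would assume $V=T_\phi=PM_\phi$ with $\phi\in L^\infty$. For $f\in H^2$ and $n\ge0$ one has $\langle Vf,z^n\rangle=\langle P(\phi f),z^n\rangle=\langle\phi f,z^n\rangle=\widehat{\phi f}(n)$, while for $n\ge1$, $\langle Vz^nf,1\rangle=\langle\phi z^nf,1\rangle=\widehat{\phi f}(-n)$. Since $\phi f\in L^2$, the two series defining $h_f$ reassemble exactly the $L^2$-convergent expansion $\sum_{n\in\mathbb Z}\widehat{\phi f}(n)e^{in\theta}$, so $h_f=\phi f$. A nonzero function in $H^2$ is nonzero a.e.\ on $\mathbb T$, so $R_f=h_f/f=\phi$ for every $f\in H^2\setminus\{0\}$, which is independent of $f$.

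\textbf{Converse.} Now suppose $V$ is bounded and $R_f$ equals one fixed $\psi$ for all admissible $f$. I would first note that for bounded $V$ the object $h_p$ is a genuine $L^2$ function for every polynomial $p$: the analytic part is $Vp\in H^2$, and for $p$ of degree $d$ the coefficients $\langle Vz^np,1\rangle=\langle z^np,V^*1\rangle$ only involve the Fourier coefficients of $V^*1$ in a window of length $d+1$, which makes the negative-frequency part square summable. Hence $h_p=\psi p$ for every polynomial $p$ (with $\psi=R_1=h_1$), so in particular $h_{zf}=z\,h_f$ for every polynomial $f$. Expanding: the left side has analytic part $V(zf)$ and coefficients $\langle Vz^{n+1}f,1\rangle$ at $e^{-in\theta}$; multiplying $h_f$ by $z$ shifts its negative-frequency tail up by one, producing the constant $\langle Vzf,1\rangle$, the same coefficients $\langle Vz^{n+1}f,1\rangle$ at $e^{-in\theta}$, and analytic part $zVf$. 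The negative-frequency parts cancel automatically, and equating analytic parts gives $V(zf)-zVf=\langle Vzf,1\rangle\cdot 1$. Taking inner products with $z^m$ for $m\ge0$ then yields $\langle S^*VSf,z^m\rangle=\langle VSf,z^{m+1}\rangle=\langle Vf,z^m\rangle$, so $S^*VSf=Vf$ for all polynomials $f$, and by boundedness $S^*VS=V$; by the equivalent descriptions this means $V$ is a Toeplitz operator.

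\textbf{Expected obstacle.} The core of the converse is routine Fourier bookkeeping; the genuinely delicate point is deciding what the hypothesis actually asserts, i.e.\ making sure that $h_f$ and the pointwise quotient $h_f/f$ are well-defined for enough $f$ to drive the argument — this is exactly why I would work with polynomials, where $L^2$-convergence is automatic, rather than with all of $H^2$. If one wants the statement verbatim for every $f\in H^2$, one closes the loop \emph{a posteriori}: the common value $\psi$ produced above is the $L^\infty$ symbol of $V$, so $h_f=\psi f\in L^2$ for all $f\in H^2$ after the fact.
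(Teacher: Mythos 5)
Your proposal is correct, and its forward half is exactly the paper's computation ($h_f=\phi f$, hence $R_f=\phi$, with your added remark that nonzero $H^2$ functions are nonzero a.e.\ justifying the quotient). The converse, however, takes a genuinely different route. The paper argues by contrapositive: if $V$ is not Toeplitz, some diagonal of its matrix is non-constant, i.e.\ $\langle Vz^n,z^m\rangle\neq\langle V1,z^{m-n}\rangle$ for some $n<m$, and then the two particular sub-symbols $R_1$ and $R_{z^n}$ are compared coefficientwise, the discrepancy showing up in the Fourier coefficient $a_{m-n}-b_m\neq 0$. You instead argue directly: from independence of the sub-symbol you extract the single identity $h_{zf}=z\,h_f$ for polynomials $f$, observe that the negative-frequency and constant terms match automatically, and read off $V(zf)=\langle Vzf,1\rangle\cdot 1+zVf$, hence $\langle S^*VSf,z^m\rangle=\langle Vf,z^m\rangle$ and, by density and boundedness, $S^*VS=V$; you then invoke the equivalence of this algebraic identity with Toeplitzness (which the paper itself takes as the definition, so the appeal is legitimate, and is in any case Brown--Halmos if one wants $V=PM_\phi$ with $\phi\in L^\infty$). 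Each approach has its virtue: the paper's contrapositive is self-contained and pinpoints that non-uniqueness is already witnessed by the monomials $1$ and $z^n$ at an explicit frequency, while your direct argument shows that the hypothesis can be weakened considerably --- only the comparisons $R_f=R_{zf}$ over polynomials are needed --- and it makes transparent the conceptual link between uniqueness of the sub-symbol and the algebraic condition $S^*VS=V$. Your care in restricting to polynomials, where $h_p$ is genuinely in $L^2$ (analytic part $Vp\in H^2$, negative tail controlled by finitely many shifts of $V^*1$), addresses the vagueness in the phrase ``convergent in some sense'' more explicitly than the paper does.
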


\begin{proof}Suppose that $V=T_\phi$ is a Toeplitz operator with symbol $\phi$. $$h_f = \sum_{n=1}^\infty \langle T_\phi f z^n, 1 \rangle_{H^2} e^{-in\theta} + \sum_{n=0}^\infty \langle T_\phi f, z^n \rangle_{H^2} e^{in\theta}$$
$$= \sum_{n=1}^\infty \langle \phi f z^n, 1 \rangle_{L^2} e^{-in\theta} + \sum_{n=0}^\infty \langle \phi f, z^n \rangle_{L^2} e^{in\theta} = \phi \cdot f.$$ Thus $R_f = h_f/f = \phi$ is independent of the choice of $f$.

Now suppose that $V$ is not a Toeplitz operator. This means there is a pair of integers $n,m \in \mathbb{N}$ such that $n < m$ (without loss of generality) and $\langle Vz^n,z^m \rangle \neq \langle V1, z^{m-n} \rangle$. In this case consider the two Sarason Sub-Symbols $$R_1 = \sum_{k=1}^\infty \langle Vz^k, 1\rangle e^{-ik\theta} + \sum_{k=0}^\infty \langle V1, z^k \rangle e^{ik\theta} = \sum_{k=-\infty}^\infty a_k e^{ik\theta}\text{ and}$$ $$R_{z^n}=e^{-in\theta} \left( \sum_{k=1}^\infty \langle Vz^{n+k}, 1\rangle e^{-ik\theta} + \sum_{k=0}^\infty \langle Vz^n, z^k \rangle e^{ik\theta}\right) = e^{-in\theta} \sum_{k=-\infty}^\infty b_k e^{ik\theta}.$$ The difference of the two sub-symbols yields $$R_1 - R_{z^n} = e^{-in\theta} \sum_{k=-\infty}^\infty(a_{k-n}-b_k)e^{ik\theta}.$$ The coeficient $(a_{m-n}-b_m) \neq 0$ by construction. Therefore $R_1 \neq R_{z^n}$. \end{proof}

Thus every bounded Toeplitz operator is characterized by the uniqueness of its Sarason Sub-Symbols. This motivates the investigation into densely defined operators. The following sections investigate the interplay between the Sarason Sub-Symbols and densely defined Sarason-Toeplitz operators.

\section{Analytic Densely Defined Toeplitz Operators}

Just as in Proposition \ref{bddToep}, an analytic densely defined Toeplitz operator is completely characterized by a symbol. As shown in \cite{sarason2008}, these operators are precisely the multiplication operators with symbols, $\phi$, in the Smirnov class of functions. That is, each $\phi$ can be written as a ratio of $H^\infty$ functions $b/a$ where $|a(e^{i\theta})|^2 + |b(e^{i\theta})|^2 = 1$ for all $\theta$  and $a$ an outer function. In this setting the Sarason Sub-Symbol is unique.

\begin{theorem}Given a Sarason-Toeplitz operator $T$, there exists a symbol $\phi \in N^+$ for which $T=M_\phi$ iff $\langle Tzf, 1 \rangle = 0$ for all $f \in D(T)$. Moreover, the Sarason Sub-Symbol is unique.\end{theorem}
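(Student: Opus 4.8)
The plan is to prove both implications of the equivalence, and then extract uniqueness of the Sarason Sub-Symbol as a byproduct of the ``only if'' direction. For the forward implication, suppose $T = M_\phi$ for some $\phi \in N^+$. Then for any $f \in D(T)$ we have $Tzf = \phi z f = z(\phi f) = z(Tf)$, which is a function vanishing at $0$; hence $\langle Tzf, 1\rangle = \langle z(Tf), 1\rangle = 0$. This direction is essentially immediate once one observes that multiplication by $\phi$ commutes with multiplication by $z$. For the converse, I would start from the hypothesis $\langle Tzf,1\rangle = 0$ for all $f\in D(T)$ and use the three Sarason-Toeplitz axioms to show that the ``negative'' part of every $h_f$ vanishes, i.e. $h_f = \sum_{n=0}^\infty \langle Tf, z^n\rangle e^{in\theta}$, which is the boundary function of an analytic (Smirnov-class) function. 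Indeed, by condition (1) the domain is $S$-invariant, so $Tfz^n$ makes sense, and applying the hypothesis with $fz^{n-1}$ in place of $f$ gives $\langle Tfz^n, 1\rangle = \langle Tz(fz^{n-1}),1\rangle = 0$ for all $n \ge 1$. Thus $h_f$ reduces to its analytic part.

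Next I would identify this analytic part with $\phi \cdot f$ for a single function $\phi$ independent of $f$, which simultaneously gives uniqueness of the sub-symbol. The candidate symbol is $\phi := R_f = h_f/f$ for any fixed $f \in D(T) \setminus \{0\}$; one must check $\phi$ does not depend on the choice of $f$. To see independence, take $f, g \in D(T)$ and compare $h_f g$ with $h_g f$: using condition (2), $S^*TS = T$, one gets the relation $\langle Tf, z^n\rangle$-type coefficients are intertwined so that $h_{zf} = z h_f$, and more generally $h_{pf} = p\, h_f$ for polynomials $p$ (here the analyticity, i.e. the vanishing of the negative coefficients established above, is what makes this clean — there is no wrap-around term). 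Then for $f,g \in D(T)$ one approximates and compares to conclude $h_f / f = h_g / g$ as elements of the Smirnov class (or at least as meromorphic functions on $\mathbb{D}$ whose ratio is forced to agree). The fact that $\phi = h_f/f$ with $h_f \in H^2$ and the ratio being independent of $f\in D(T)$, together with Sarason's characterization quoted in the excerpt that such quotients land in $N^+$, yields $\phi \in N^+$.

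Finally I would verify $T = M_\phi$, not merely that $T$ extends $M_\phi$ — the theorem claims equality. The inclusion $M_\phi \subseteq T$ should follow because for $f \in D(T)$, the vector $Tf$ has Fourier coefficients $\langle Tf, z^n\rangle = \langle h_f, z^n\rangle = \langle \phi f, z^n\rangle$ for $n \ge 0$, and since $Tf \in H^2$ it has no negative coefficients, so $Tf = \phi f = M_\phi f$; this shows $f \in D(M_\phi)$ and $Tf = M_\phi f$, i.e. $T \subseteq M_\phi$. For the reverse inclusion $M_\phi \subseteq T$, I would use that $M_\phi$ (as an analytic Toeplitz operator) is closed — or rather, that $T$ being a Sarason-Toeplitz operator whose domain contains a cyclic-type function $f$ with $\phi f \in H^2$ forces the domain to be at least as large as $\{g : \phi g \in H^2\} = D(M_\phi)$; this is where conditions (1) and (3) do real work, since (3) lets one divide by $z$ and (1) lets one multiply, generating enough of $D(M_\phi)$ from $f$.

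The main obstacle I anticipate is precisely this last equality $D(T) = D(M_\phi)$: showing the domain of $T$ is not strictly smaller than $D(M_\phi)$. The forward argument only gives $T \subseteq M_\phi$ easily; closing the gap requires exploiting all three Sarason axioms (especially the ``shift-down'' condition (3)) to show $D(T)$ is forced to be a full $M_z$-invariant, $M_z^*$-reduced submodule that must coincide with $D(M_\phi)$ — or alternatively, invoking that $T$ is closed and $M_\phi$ restricted to a dense subset of $D(M_\phi)$ already agrees with $T$, so the closures match. Handling the convergence and ``convergent in some sense'' clause in the definition of $h_f$ rigorously (interpreting the series as the $H^2$ function with those Fourier coefficients, which is legitimate here because the negative part is zero and the positive part is just $Tf$) is a secondary technical point but should be routine once the analyticity reduction is in place.
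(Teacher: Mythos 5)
Your overall strategy matches the paper's: kill the negative Fourier coefficients of $h_f$ using the hypothesis, show the sub-symbol $R_f=h_f/f$ is independent of $f$, define $\phi=R_f$, and conclude $T=M_\phi$ with $\phi\in N^+$ via Sarason's results. However, the central step --- independence of $R_f$ from $f$, which is exactly the claimed uniqueness --- is not actually proved in your write-up. Your identity $h_{pf}=p\,h_f$ for polynomials $p$ is correct, but it only compares $f$ with its own polynomial multiples; to compare two unrelated $f,g\in D(T)$ you say ``one approximates and compares,'' and that mechanism fails as stated: $gf$ need not lie in $D(T)$ (it need not even be in $H^2$), and since $T$ is only closed and unbounded you cannot pass $T$ through the limit $S_N g\cdot f\to gf$ without graph-norm control you do not have. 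The argument that actually works (and is the heart of the paper's proof) is finitary, not a limiting one: the $n$-th Fourier coefficient of $h_fg-h_gf$ is computed by convolution and involves only the first $n+1$ Taylor coefficients of $f$ and $g$, so it equals $\bigl\langle T\bigl(f\,S_ng-g\,S_nf\bigr),z^n\bigr\rangle$, where $S_n$ denotes the $n$-th Taylor partial sum. The function $f\,S_ng-g\,S_nf=f(S_ng-g)-g(S_nf-f)$ lies in $D(T)$ (it is a combination of polynomial multiples of $f$ and $g$) and vanishes to order at least $n+1$ at the origin; writing it as $z^{n+1}F_n$ and using $S^*TS=T$ together with axiom (3) (which keeps $z^jF_n$ in the domain as you shift down), the coefficient collapses to $\langle T(zF_n),1\rangle=0$ by the hypothesis. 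Note the hypothesis is used a second time here, not only to remove negative coefficients; this is the idea missing from your plan.

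Two smaller points. First, a quotient of two $H^2$ functions is only in the Nevanlinna class in general, not $N^+$ (the denominator must be outer), so ``$\phi=h_f/f$ with $h_f,f\in H^2$'' does not by itself put $\phi$ in the Smirnov class; $\phi\in N^+$ comes from the fact that $\{f:\phi f\in H^2\}\supseteq D(T)$ is dense, i.e.\ from Sarason's theorem on densely defined multiplication/shift-commuting operators. Second, your worry about $D(T)=D(M_\phi)$ is legitimate, but the clean resolution is the same citation: once $T$ is known to be closed, densely defined, with shift-invariant domain on which it acts as multiplication by the analytic function $\phi$ (so $T$ commutes with $S$), Sarason's theorem says such operators are precisely the $M_\phi$, $\phi\in N^+$, with their full natural domains, which yields the equality outright; your alternative ``the closures match'' would require knowing $D(T)$ is a core for $M_\phi$, which is not obvious a priori.
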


\begin{proof}The forward direction follows since $T_\phi = M_\phi$ for $\phi \in N^+$. This means $TS=ST$, and $\langle Tzf, 1 \rangle = \langle zTf, 1 \rangle = 0$ since $1 \in (zD(T))^\perp$.

In order to establish sufficiency, let $f_1 = \sum_{n=0}^\infty a_n z^n, f_2 = \sum_{n=0}^\infty b_n z^n \in D(T)\setminus \{0\}$. By hypothesis, $h_{f_i} = \sum_{n=0}^\infty \langle Tf_i 1, z^n \rangle z^n = Tf_i \in H^2$ for $i=1,2$.

In order to establish uniqueness of the symbol, $R_{f_1} = R_{f_2}$, consider the function $h_1 f_2 - h_2 f_1 \in L^1(\mathbb{T})$. The Fourier series of $h_1f_2$ and $h_2f_1$ can be computed through convolution. Hence,
$$h_1 f_2 = \sum_{n=0}^\infty \left( \sum_{k=0}^n \langle Tf_1, z^{n-k} \rangle b_k \right) z^n = \sum_{n=0}^\infty \left( \sum_{k=0}^n \langle Tz^k f_1, z^n \rangle b_k \right) z^n \text{, and }$$
$$h_2 f_1 = \sum_{n=0}^\infty  \left( \sum_{k=0}^n \langle Tz^k f_2, z^n \rangle a_k \right) z^n.$$ The second equality follows since $S^* T S f = Tf$, and $\langle TSf, 1 \rangle = 0$ implies that $TSf(0)=0$ and $S S^* T S f = T S f$. This leads to $$H := h_1f_2 - h_2f_1 = \sum_{n=0}^\infty \left( \sum_{k=0}^n \langle Tz^kf_1, z^{n} \rangle b_k - \sum_{k=0}^n \langle Tz^k f_2, z^n \rangle a_k \right)z^n.$$ In order to establish that each coefficient is in fact zero, consider, for arbtrary $n$, the coefficient of $z^n$:
$$\hat H(n) = \sum_{k=0}^n \langle Tz^kf_1, z^{n} \rangle b_k - \sum_{k=0}^n \langle Tz^k f_2, z^n \rangle a_k$$
$$= \left\langle T\left( f_1 \left[ \sum_{k=0}^n b_k z^k - f_2 \right] - f_2 \left[ \sum_{k=0}^n a_k z^k - f_1 \right] \right), z^n \right\rangle.$$
The $H^2$ function inside of $T$ is in fact in the domain of $T$ by the properties of  Sarason-Toeplitz operators, and this function has a zero of order greater than $n$ at zero. Denote by $z^{n+1}F_n$, the function in the argument of $T$. By our hypothesis, $$\hat H(n) = \langle Tz^{n+1} F_n, z^n \rangle = \langle TzF_n, 1 \rangle = 0.$$
Therefore $R_{f_1} = R_{f_2}$ for any choice of $f_1, f_2 \in D(T)\setminus \{ 0 \}$, so let $\phi = R_{f_1}$ be the proposed symbol for the Sarason-Toeplitz operator $T$. $h_f = Tf \in H^2$ for each $f \in D(T)$. Further, given any $z \in \mathbb{D}$ there exists $f_z \in D(T)$ such that $f(z) \neq 0$ (this follows from the density of $D(T)$ in $H^2$). Thus $\phi = Tf_z/f_z$ is analytic at $z$ for every point $z \in \mathbb{D}$. Finally note that for each $f \in D(T)$, $M_\phi f = \phi f = ( Tf / f ) f = Tf$. Thus $T=M_\phi$ is a densely defined multiplication operator with an analytic symbol. By \cite{sarason2008}, $\phi \in N^+$.
\end{proof}

\begin{corollary}A Sarason-Toeplitz operator $T$ on $H^2$ is analytic ($ST=TS$) iff $\langle Tzf, 1 \rangle =0$ for all $f \in D(T)$.\end{corollary}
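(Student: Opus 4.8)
The plan is to read this off from the Theorem together with Sarason's description of the analytic Toeplitz operators as precisely the multiplication operators $M_\phi$ with $\phi\in N^+$ (\cite{sarason2008}). Given that description, the conditions ``$T$ is analytic'' and ``$T=M_\phi$ for some $\phi\in N^+$'' are literally the same, and the Theorem already equates the second of these with ``$\langle Tzf,1\rangle=0$ for all $f\in D(T)$''; so the corollary is essentially immediate, and all that is left is to make the two implications explicit.

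For $ST=TS\Rightarrow\langle Tzf,1\rangle=0$ I would not even invoke the Theorem: take $f\in D(T)$, use shift invariance of $D(T)$ (part of the Sarason-Toeplitz hypothesis) to get $zf=Sf\in D(T)$, note that $f$ then lies in both $D(TS)$ and $D(ST)$ (the latter because $S$ is bounded and everywhere defined), and conclude from $ST=TS$ that $T(zf)=z(Tf)$. Hence $\langle Tzf,1\rangle=\langle z(Tf),1\rangle$, which is the zeroth Fourier coefficient of the $H^2$ function $z\cdot Tf$ and therefore vanishes (equivalently $1\perp SH^2$). This is exactly the computation already appearing in the proof of the Theorem.

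For the converse, assume $\langle Tzf,1\rangle=0$ for all $f\in D(T)$; the Theorem gives $T=M_\phi$ with $\phi\in N^+$ and $D(T)=D(M_\phi)=\{g\in H^2:\phi g\in H^2\}$, and it remains to verify $SM_\phi=M_\phi S$ as unbounded operators. The action is clear once the domains match, so the point is $D(SM_\phi)=D(M_\phi S)$: on the one hand $D(SM_\phi)=\{g\in D(M_\phi):\phi g\in D(S)\}=D(M_\phi)$ since $S$ is bounded, and on the other $D(M_\phi S)=\{g\in H^2:z\phi g\in H^2\}$, which equals $D(M_\phi)$ because $\phi g$ is holomorphic on $\mathbb{D}$ with Taylor coefficients obtained by shifting those of $z\phi g$, so $z\phi g\in H^2$ iff $\phi g\in H^2$. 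On this common domain $SM_\phi g=z\phi g=M_\phi Sg$, so $T$ is analytic. The only place requiring any care is this last domain bookkeeping — one cannot simply transplant the bounded-operator identity $SM_\phi=M_\phi S$ to the unbounded setting without checking the domains agree — but no genuine difficulty arises.
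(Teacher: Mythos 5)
Your proof is correct and follows the route the paper intends: the corollary is stated without proof as an immediate consequence of the preceding theorem, with the forward direction being exactly the computation $\langle Tzf,1\rangle=\langle zTf,1\rangle=0$ already in the theorem's proof. Your explicit domain check that $D(SM_\phi)=D(M_\phi S)=D(M_\phi)$ simply supplies the bookkeeping the paper leaves implicit, and it is done correctly.
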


\section{Symbols that are ratios of $L^2$ functions and $H^2$ functions}
In the case of an analytic densely defined Toeplitz operator with symbol $\phi$ (expressed as $\phi = b/a$ in canonical form) , the domain is given by $D(T)=aH^2$. This means that there is an outer function, in particular $a$, in the domain of $T$. Moreover, since $T=M_\phi$, it is clear that $h_a = \phi a \in H^2$. Therefore, the existence of an outer function $f \in D(T)$ for which $h_f$ is well defined is straighforward in the case of analytic Toeptliz operators.

When we consider a co-analytic Toeptliz operator of the form $M_\phi^*$, its domain is given by $\mathcal{H}(b)$, the de Branges-Rovnyak space corresponding to $b$. $\mathcal{H}(b)$ contains the space $aH^2$ as a subspace. Hence, it also has an outer function in its domain. In particular, if $f=a\cdot p$, where $p$ is a polynomial, then $h_f$ (corresponding to $M_\phi^*$) is in $L^2$. Since $a$ is an outer function, the collection of all such $f$ is dense in $H^2$. Therefore, the set $$D_2(T) = \{ f \in D(T) : h_f \in L^2 \}$$ is dense in $D(T) = D(M_\phi^*) = \mathcal{H}(b)$.

The answer to the question of the nonemptiness (as well as density) of the space $D_2(T)$ is unknown for general Sarason-Toeplitz operators. In this section, the applicability of the Sarason Sub-Symbol is extended to include functions of the form $B/A$ where $B \in L^2$ and $A$ is an $H^2$ outer function.

\begin{lemma}\label{cdd} Let $\phi$ be a function on the unit circle that can be written as the ratio of an $L^2$ function and an $H^2$ outer function. Let $$D(M_\phi) = \{ f \in H^2 : \phi \cdot f \in L^2\}.$$ The operator $M_\phi : D(M_\phi) \to L^2$ is a closed densely defined operator on $H^2$.\end{lemma}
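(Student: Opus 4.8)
The plan is to fix a representation $\phi = B/A$ with $B \in L^2(\mathbb{T})$ and $A$ an $H^2$ outer function, and to verify the two assertions — dense domain and closed graph — separately.

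For density, the natural candidates for elements of $D(M_\phi)$ are the functions $f = A p$ with $p$ a polynomial (or, more generally, $p \in H^\infty$). Indeed $Ap \in H^2$ since $A \in H^2$ and $p$ is bounded, while $\phi \cdot (Ap) = B p \in L^2$ since $B \in L^2$ and $p \in L^\infty(\mathbb{T})$; hence $Ap \in D(M_\phi)$. Because $A$ is outer, the closure of $\{ A p : p \text{ a polynomial}\}$ is all of $H^2$, which is one of the standard characterizations of outer functions. Therefore $D(M_\phi)$ is dense in $H^2$.

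For closedness I would argue directly at the level of boundary functions. First note that since $A$ is outer it is nonzero a.e.\ on $\mathbb{T}$ and $B$ is finite a.e., so $\phi$ is a well-defined, finite-valued measurable function a.e. Now suppose $f_n \in D(M_\phi)$ with $f_n \to f$ in $H^2$ and $\phi f_n \to g$ in $L^2$. Convergence in $H^2$ and in $L^2$ passes to boundary values in $L^2(\mathbb{T})$, so we may extract a single subsequence along which $f_{n_k} \to f$ a.e.\ and $\phi f_{n_k} \to g$ a.e.\ on $\mathbb{T}$. At a.e.\ point where $\phi$ is finite and $f_{n_k} \to f$ we have $\phi f_{n_k} \to \phi f$ pointwise, forcing $\phi f = g$ a.e. In particular $\phi f = g \in L^2$, so $f \in D(M_\phi)$ and $M_\phi f = g$; this says precisely that the graph of $M_\phi$ is closed. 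Equivalently, one may observe that the graph of $M_\phi$ is the intersection of $H^2 \oplus L^2$ with the graph of the (automatically closed) operator of multiplication by $\phi$ on $L^2$, and hence is closed.

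The argument is largely routine; the only points requiring care are the two facts invoked from Hardy space theory — that $A$ outer makes $\{Ap : p \text{ a polynomial}\}$ dense in $H^2$, and that $A$ outer makes $\phi$ genuinely finite a.e.\ so that pointwise multiplication on $\mathbb{T}$ is meaningful. Neither presents a real obstacle, so I expect the proof to be short.
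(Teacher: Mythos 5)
Your proof is correct and follows essentially the same route as the paper: density via $\{Ap : p \text{ a polynomial}\} \subset D(M_\phi)$ together with the outer property of $A$, and closedness via passing to a.e.\ convergent subsequences of $f_n$ and $\phi f_n$, using that $A$ outer is nonvanishing a.e.\ so that $\phi f_{n_k} \to \phi f$ pointwise a.e. The only cosmetic difference is that you extract one nested subsequence at once (and add the optional graph-intersection remark), whereas the paper extracts a subsequence and then a sub-subsequence; the substance is identical.
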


\begin{proof}
Write $\phi = B/A$ where $B \in L^2$ and $A \in H^2$ is an outer function. Since $B \cdot p \in L^2$ for every polynomial $p(z)$, we see that $A \cdot p \in D(M_\phi)$ for every polynomial $p$. Therefore, $D(M_\phi)$ is dense in $H^2$ by the outer property of $A$.

Now suppose that $\{f_n\} \subset D(M_\phi)$ and $f_n \to f \in H^2$. Suppose further that $M_\phi f_n \to F \in L^2$. Since $f_n \to f$ in the $L^2$ norm, there exists a subsequence, $\{ f_{n_j}\}$, such that $f_{n_j} \to f$ almost everywhere. Since $A$ is an outer function, $A(e^{i\theta}) \neq 0$ for almost every $\theta$. Thus $\phi f_{n_j} \to \phi f$ almost everywhere.

The subsequence $\phi f_{n_j} \to F$ in $L^2$ and so there is a subsequence $\phi f_{n_{j_k}} \to F$ almost everywhere. However, this subsquence also converges almost everywhere to $\phi f$. Thus we may conclude that $\phi f = F$ almost everywhere, which completes the proof.
\end{proof}

\begin{theorem}\label{l2symbol}Let $T$ be a Sarason-Toeplitz operator. If there is an $H^2$ outer function $f \in D(T)$ such that $\sum_{n=1}^\infty \langle Tz^nf, 1 \rangle \bar z^n \in L^2$, then $T$ extends a closed densely defined operator of the form $T_\phi = PM_\phi$ where $\phi=R_f$ is the ratio of an $L^2$ function and an $H^2$ outer function. Moreover, $D_2(T)$ is a dense subset of $D(T)$.\end{theorem}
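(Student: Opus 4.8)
The plan is to manufacture the symbol directly from the hypothesis as $\phi=R_f$, to place $M_\phi$ inside the framework of Lemma~\ref{cdd}, and then to show that $T$ agrees with $PM_\phi$ on the polynomial multiples of $f$ --- a set which is dense enough in the graph norm to pin $T$ down as an extension of $PM_\phi$.

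First I would check that $R_f$ is a legitimate symbol of the advertised type. Writing $h_f = Tf + \sum_{n=1}^{\infty}\langle Tz^nf,1\rangle\bar z^{\,n}$, the first term lies in $H^2\subseteq L^2$ because $T$ maps into $H^2$, and the second lies in $L^2$ by hypothesis; hence $h_f\in L^2$. Since $f$ is outer, $f\neq0$ a.e.\ on $\mathbb T$, so $\phi:=R_f=h_f/f$ is a well-defined measurable function, by construction the quotient of the $L^2$ function $h_f$ and the $H^2$ outer function $f$. Lemma~\ref{cdd} then provides a closed densely defined operator $M_\phi$ on $H^2$ with $D(M_\phi)=\{g\in H^2:\phi g\in L^2\}$; put $T_\phi=PM_\phi$, and note $z^kf\in D(M_\phi)$ for all $k\ge0$ since $\phi\,z^kf=z^kh_f\in L^2$.

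Next I would show $T$ and $T_\phi$ agree on $\mathcal Pf:=\{pf:\ p\text{ a polynomial}\}$. From the algebraic properties of a Sarason--Toeplitz operator, in particular $S^*TS=T$, one gets for any $g\in D(T)$ the recursion $T(zg)=zTg+\langle Tzg,1\rangle$, and inductively
\[
T(z^kf)=z^kTf+\sum_{j=1}^{k}z^{k-j}\langle Tz^jf,1\rangle .
\]
Reading the Fourier coefficients of $h_f$ off its definition, namely $\widehat{h_f}(m)=\langle Tf,z^m\rangle$ for $m\ge0$ and $\widehat{h_f}(-m)=\langle Tz^mf,1\rangle$ for $m\ge1$, one then verifies $\langle T(z^kf),z^n\rangle=\langle\phi\,z^kf,z^n\rangle$ for every $n\ge0$; since $T(z^kf)\in H^2$ this says $T(z^kf)=P(\phi\,z^kf)=T_\phi(z^kf)$, and by linearity $T=T_\phi$ on $\mathcal Pf$. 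The same coefficient computation gives $h_{z^kf}=z^kh_f\in L^2$, so $\mathcal Pf\subseteq D_2(T)$.

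The last, and I expect hardest, step is to show $\mathcal Pf$ is a core for $M_\phi$; for then, $T$ being closed and extending $T_\phi|_{\mathcal Pf}$, it extends the closure of $T_\phi|_{\mathcal Pf}$, which the core property identifies with $T_\phi=PM_\phi$ on its natural domain. The mechanism is Smirnov-class approximation: because $f$ is outer, $1/f$ is again outer, hence in $N^+$, so for $g\in D(M_\phi)$ the quotient $u=g/f$ lies in $N^+$; writing $u=b/a$ in canonical form and replacing $a$ by the outer function $a_k$ with $|a_k^*|=\max(|a^*|,1/k)$, the outer functions $O_k=a/a_k\in H^\infty$ satisfy $\|O_k\|_\infty\le1$, $O_k\to1$ a.e., and $O_ku\in H^\infty$. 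Then $f(O_ku)=O_kg\to g$ in $H^2$ and $\phi\,f(O_ku)=O_k\,\phi g\to\phi g$ in $L^2$ by dominated convergence (dominating functions $|g|$ and $|\phi g|$), while each $O_ku\in H^\infty$ is approximated in both norms by its Ces\`aro means, which are polynomials; a diagonal argument then yields the core property. The bookkeeping in this double approximation --- keeping the $H^2$ convergence of $fp_j$ and the $L^2$ convergence of $h_fp_j$ under control simultaneously --- is the main technical point, but the uniform domination by $|g|$ and $|\phi g|$ makes it routine once $u\in N^+$ is in hand. Finally, since $f$ is outer, $\mathcal Pf$ is dense in $H^2$, so $D_2(T)\supseteq\mathcal Pf$ is dense in $D(T)$, which is the last assertion.
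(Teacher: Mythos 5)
Your proposal is correct, and its first half is essentially the paper's argument: you build $h_f=Tf+\sum_{n\ge1}\langle Tz^nf,1\rangle\bar z^n\in L^2$, set $\phi=R_f=h_f/f$, invoke Lemma~\ref{cdd}, and verify $T(z^kf)=P(\phi z^kf)$ --- the paper does this by a convolution computation of the coefficients of $h_f\cdot p$, you do it via the recursion $T(zg)=zTg+\langle Tzg,1\rangle$ coming from $S^*TS=T$, which is the same identity organized differently, and both routes give $\mathcal Pf\subseteq D_2(T)$ dense because $f$ is outer. Where you genuinely diverge is the conclusion: the paper stops by observing that $T_{R_f}|_F\subset T$ with $T$ closed, hence $T$ extends the \emph{closure of the restriction} $\left.T_{R_f}\right|_F^{**}$, and says nothing about whether that closure is all of $PM_\phi$ on $D(M_\phi)$. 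You add a Smirnov-class approximation ($u=g/f\in N^+$, truncated outer parts $O_k=a/a_k$ with $\|O_k\|_\infty\le1$, then Ces\`aro means and a diagonal argument) to show $\mathcal Pf$ is a core for $M_\phi$ in the graph norm, and since $P$ is bounded and $T$ is closed this upgrades the conclusion to $T\supseteq PM_\phi$ on its full natural domain --- a strictly stronger statement than the paper proves, and arguably closer to the literal wording of the theorem. The only soft spot is your unproved assertion that $O_k\to1$ a.e.\ on $\mathbb T$: the moduli converge a.e., but the boundary phases are conjugate functions of $\log\min(1,k|a|)$, so you only get convergence in measure (via the weak $(1,1)$ bound for conjugation), or a.e.\ along a subsequence, or alternatively a weak-$*$ argument using $\|O_k\|_\infty\le1$ and convergence on compacta; any of these suffices for your dominated-convergence step, so the gap is cosmetic. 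Note also that even if the core step were dropped, what remains already proves the theorem as the paper states it.
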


\begin{proof}Let $f$ be an $H^2$ outer function in $D(T)$, and let $h_f$ be the corresponding numerator of the Sarason Sub-symbol corresponding to $f$. Express $h_f = \sum_{n=-\infty}^\infty b_n z^n$. By the properties of Sarason-Toeplitz operators, $b_{n-m} = \langle Tfz^m, z^n \rangle$. Now consider the operator $T_{R_f} = P M_{R_f}$ which is closed and densely defined by Lemma \ref{cdd}.

Since the domain of $T$ is shift invariant, $f \cdot p \in D(T)$ for every polynomial $p$. Moreover, $h_f \cdot p \in L^2$ for every polynomial $p$. It follows that $D_2(T) \subset D(T)$ is dense in $H^2$ since $f$ is an outer function. Define the set $F := \{ f \cdot p : p \text{ is a polynomial } \} \subset D_2(T)$.

Let $p(z) = a_k z^k + \cdots + a_1 z + a_0$ be a polynomial of degree $k \in \mathbb{N}$. The product of $h(z)$ and $p(z)$ can be calculated as follows:
$$h(z) \cdot p(z) = \sum_{n=-\infty}^\infty \left( \sum_{m=0}^k b_{n-m}a_m \right) z^n = \sum_{n=-\infty}^\infty \left( \sum_{m=0}^k \langle  Ta_m z^m f, z^n \rangle \right) z^n$$
$$=\sum_{n=-\infty}^\infty \langle Tfp, z^n \rangle z^n = w(z) + T(fp)(z).$$ Where $w(z) \in \overline{H^2_0}$ since $h_f \in L^2$. In particular, this means $T_{R_f}(fp) = P(hp) = T(fp)$ for all polynomials $p$. Hence, $T$ agrees with $T_{R_f}$ on a dense domain, and $T$ extends $\left. T_{R_f} \right|_F.$ Finally, by Lemma \ref{cdd}, $\left. T_{R_f}\right|_F$ is closable, and $\left. T_{R_f} \right|_F \subset T$ implies that $\left. T_{R_f} \right|_F^{**} \subset T^{**} = T$. 
\end{proof}

The above theorem relies on the ability to find an outer function in $D_2(T)$. Once such a function is found, $T$ is shown to be a closed extension of the corresponding operator $PM_{R_f}$. When such a function does not exist, it can be shown that $T$ is the limit of multiplication type Toeplitz operator on a restricted domain.

\begin{proposition}Suppose $T$ is a Sarason-Toeplitz operator, let $f \in D(T)$, and define $F=\{f \cdot p : p \text{ is a polynomial } \}$. There exists a sequence of multiplication type Toeplitz operators, $T_{\phi_M} = P M_{\phi_M}$ such that $T_{\phi_M}$ converges to $T$ strongly on all of $F$. Moreover, these operators have a common dense domain.\end{proposition}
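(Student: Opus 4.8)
The plan is to take $\phi_M$ to be the $M$th partial Sarason Sub-Symbol $R_{f,M}=h_{f,M}/f$ attached to the fixed function $f$, and to verify three things: that each $T_{\phi_M}:=PM_{\phi_M}$ is a closed densely defined multiplication-type Toeplitz operator; that all the $T_{\phi_M}$ share a single dense subspace inside their domains; and that for every polynomial $p$ one in fact has $T_{\phi_M}(fp)=T(fp)$ once $M\ge\deg p$, which yields the asserted strong convergence on $F$ (indeed eventual equality). I would first dispose of the trivial case $f=0$, and then factor $f=f_i f_o$ into its inner and outer parts, so that $f_o\in H^2$ is outer (if $f$ is already outer this step is empty, $f_i$ being a unimodular constant). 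Note that the positive–frequency part of the series defining $h_{f,M}$ is exactly $\sum_{n\ge 0}\langle Tf,z^n\rangle z^n=Tf\in H^2$ while the negative part is a finite sum, so $h_{f,M}\in L^2$ with no convergence issue.

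To see that each $T_{\phi_M}$ is of the required type, I would write $\phi_M=R_{f,M}=h_{f,M}/f=(h_{f,M}\overline{f_i})/f_o$. Since $f_i$ is unimodular on $\mathbb{T}$, the numerator $h_{f,M}\overline{f_i}$ is again in $L^2$, and $f_o$ is an $H^2$ outer function; hence Lemma~\ref{cdd} applies and $M_{\phi_M}$ is a closed densely defined operator from $H^2$ to $L^2$, so $T_{\phi_M}=PM_{\phi_M}$ is a multiplication-type Toeplitz operator. For the common domain, set $F_o:=\{f_o\cdot p: p\text{ a polynomial}\}\subset H^2$. For each $M$ and each $p$, the product $\phi_M(f_o p)=h_{f,M}\,p\,\overline{f_i}$ is an $L^2$ function times bounded functions, hence lies in $L^2$, so $F_o\subseteq D(M_{\phi_M})$ for every $M$; and $F_o$ is dense in $H^2$ because $f_o$ is outer. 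A direct computation also gives $\phi_M(fp)=h_{f,M}p\in L^2$, so $F\subseteq D(M_{\phi_M})$ as well, and the convergence statement is meaningful.

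The substantive step is the convergence on $F$. Fixing a polynomial $p=\sum_{m=0}^k a_m z^m$ and an integer $M\ge k$, I would invoke the Sarason–Toeplitz relations exactly as in the proof of Theorem~\ref{l2symbol}: for $m\ge 0$ and $n\ge 0$ one has $\widehat{h_{f,M}}(n-m)=\langle T(z^m f),z^n\rangle$ whenever $n-m\ge -M$, and $\widehat{h_{f,M}}(j)=0$ for $j<-M$. Computing $h_{f,M}\cdot p$ by convolution, for every $n\ge 0$ the indices $n-m$ with $0\le m\le k$ satisfy $n-m\ge -k\ge -M$, so $\widehat{h_{f,M}p}(n)=\sum_{m=0}^k a_m\langle T(z^m f),z^n\rangle=\langle T(fp),z^n\rangle$. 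Since $T(fp)\in H^2$, this forces $T_{\phi_M}(fp)=P(h_{f,M}p)=\sum_{n\ge 0}\langle T(fp),z^n\rangle z^n=T(fp)$. Thus $T_{\phi_M}(fp)=T(fp)$ for all $M\ge\deg p$, whence $T_{\phi_M}g\to Tg$ for every $g\in F$.

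The main difficulty I anticipate is organizational rather than conceptual: one must remember that $f$ need not be outer, so $F$ itself need not be dense — this is precisely what forces the inner–outer detour, both to bring $\phi_M$ into the form covered by Lemma~\ref{cdd} and to produce a genuinely dense common domain $F_o$ — and one must keep the truncation bookkeeping straight, so that discarding the frequencies below $-M$ is invisible after projecting onto $H^2$ once $M\ge\deg p$. The identity $\widehat{h_{f,M}}(n-m)=\langle T z^m f,z^n\rangle$ is the one already established in the proof of Theorem~\ref{l2symbol}, so it can be quoted rather than reproved.
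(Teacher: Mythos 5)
Your proposal is correct and follows essentially the same route as the paper: take $\phi_M=R_{f,M}$, use the identity $\widehat{h_{f,M}}(n-m)=\langle Tz^mf,z^n\rangle$ to show $T_{\phi_M}(fp)=T(fp)$ once $M\ge\deg p$, and use the inner--outer factorization $f=f_if_o$ to exhibit the common dense domain $\{f_o p\}$. Your extra step of writing $\phi_M=(h_{f,M}\overline{f_i})/f_o$ and invoking Lemma~\ref{cdd} only adds the (harmless) observation that each $T_{\phi_M}$ is closed and densely defined, which the paper obtains implicitly from the same dense common domain.
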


\begin{proof}Let $f \in D(T)$ and let $p(z) = a_k z^k + \cdots + a_1 z + a_0$ be a polynomial of degree $k$. Now, as in Theorem \ref{l2symbol}, consider the product $h_{f,N}(z)p(z)$:
$$h_{f,N}(z)\cdot p(z) = \sum_{n=-N}^\infty \left( \sum_{m=0}^{\min(k,n+N)} b_{n-m} a_m \right) z^n$$
$$= \sum_{n=-N}^{k-N-1} \langle Tf(a_0+\cdots+a_{n+N z^{n+N}}, z^n \rangle z^n + \sum_{n=k-N}^\infty \langle Tfp, z^n\rangle z^n.$$

Therefore, $$T_{R_{f,N}} (fp) (z) = P(h_Np) (z)$$ $$=\sum_{n=0}^{k-N-1} \langle Tf(a_0+\cdots+a_{n+N z^{n+N}}, z^n \rangle z^n + \sum_{n=\min(k-N,0)}^\infty \langle Tfp, z^n\rangle z^n.$$

The left sum is empty for large enough $N$, therefore $\{ T_{R_{f,N}} (fp) \}$ is constant for large enough $N$. This means $T_{R_{f,N}} (fp) \to T(fp)$ as $N \to \infty$.

In order to find a common domain for each of these Toeplitz operators, consider the inner-outer factorization $f = f_i f_o$. The functions, $\tilde h_N = h_{f,N}/f_i \in L^2$ since $f_i$ has modulus 1 on the circle. Thus $\tilde h_N f_o p \in L^2$ for all polynomials $p$. This implies that $$F_0 = \{ f_0 p : p \text{ is a polynomial } \} \subset D(T_{R_{f,N}})$$ for all $N \in \mathbb{N}$.
\end{proof}

\section{An Example of a Non-Sarason Toeplitz Operator}

This section is concerned with demonstrating that a densely defined Toeplitz matrix does not necessarily define a Sarason-Toeplitz operator. In particular, this section will extend an upper triangular Toeplitz matrix and demonstrate that the domain of the extension is not shift invariant. An upper triangular Toeplitz matrix is a matrix of the form $$\left( \begin{array}{cccc}\gamma_0 & \gamma_1 & \gamma_2 & \\0 & \gamma_0 & \gamma_1 & \cdots\\ 0 & 0 & \gamma_0 & \\ & \vdots & & \ddots \end{array}\right).$$ As an operator over $H^2$, this matrix has a natural dense domain, namely the polynomials. The density of the domain does not depend on the sequence $\{ \gamma_n \}_{n\in \mathbb{N}}$. Following Sarason \cite{sarason2008}, this operator may be extended as $$Tf = \sum_{m=0}^\infty \left( \sum_{n=0}^\infty \gamma_n \hat f(n+m)\right) z^m$$ where the domain of $T$ is the collection of functions in $H^2$ for which $Tf \in H^2$.

\begin{theorem}\label{facttoep}Let $T$ be the extension of an upper triangular matrix given by $$Tf = \sum_{m=0}^\infty \left( \sum_{n=0}^\infty n! \hat f(n+m) \right) z^m.$$ The domain of $T$ is defined to be $D(T) = \{ f \in H^2 : Tf \in H^2 \}.$ Every function $f \in D(T)$ is an entire function and can be written as $f(z) = \sum_{n=0}^\infty a_n \frac{z^n}{n!}$ where $\sum_{n=0}^\infty a_n$ converges.\end{theorem}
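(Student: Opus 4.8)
The plan is to read off both conclusions directly from the defining formula for $T$, the point being that membership in $D(T)$ already forces the coefficient sequence of $Tf$ to be summable. I would begin by setting $a_n := n!\,\hat f(n)$. With this notation the asserted expansion $f(z) = \sum_{n\ge0} a_n z^n/n!$ is nothing but the Taylor series of $f$ and carries no content on its own, so the theorem reduces to two assertions: that $\sum_{n\ge0} a_n$ converges, and that this Taylor series has infinite radius of convergence.

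First I would pin down what $f \in D(T)$ means. For $Tf$ to be the $H^2$ function written in the statement, each inner series $\sum_{n\ge0} n!\,\hat f(n+m)$ must converge, since it is by definition the $m$-th Taylor coefficient of $Tf$. Specializing to $m=0$ gives that $\sum_{n\ge0} n!\,\hat f(n) = \sum_{n\ge0} a_n$ converges (indeed it equals $(Tf)(0)$), which is precisely the summability assertion.

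Next I would pass from summability to entireness. Since $\sum_{n\ge0} a_n$ converges, its terms tend to $0$, hence $(a_n)$ is bounded, say $|a_n| \le M$. Then $|\hat f(n)| \le M/n!$ for all $n$, and since $(n!)^{1/n} \to \infty$ (elementarily, $n! \ge (n/2)^{n/2}$ for $n\ge2$) one gets $\limsup_n |\hat f(n)|^{1/n} = 0$. Thus $\sum_{n\ge0} \hat f(n) z^n$ converges for every $z \in \mathbb{C}$, so $f$ extends to an entire function, which finishes the proof.

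I do not expect a genuine obstacle here; the only subtlety is the reading of the domain, namely that $f \in D(T)$ requires not merely $Tf \in H^2$ but that the defining double series be coefficientwise convergent, and it is from this (at $m=0$) that the convergence of $\sum a_n$ is extracted. After that the argument is a one-line application of the root test. One may also observe that the converse holds: for bounded $(a_n)$ with $\sum a_n$ convergent, a telescoping estimate of $\sum_{k\ge m} a_k (k-m)!/k!$ shows the coefficient sequence of $Tf$ lies in $\ell^2$, so that $f \in D(T)$; but only the stated implication is needed.
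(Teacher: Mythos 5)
Your proof of the statement as written is correct, and for that implication it is the same argument as the paper's: read off the $m=0$ coefficient of $Tf$ to get convergence of $\sum a_n$ with $a_n=n!\hat f(n)$, then note that boundedness of $(a_n)$ forces infinite radius of convergence (the paper simply asserts entireness; your root-test sentence fills that in). The real difference is one of scope: the paper's proof of this theorem is two-sided, and its substantive half is the converse — that every $f(z)=\sum a_n z^n/n!$ with $\sum a_n$ convergent lies in $D(T)$ — which is exactly what the subsequent corollary (non-shift-invariance of $D(T)$, via $a_n=(-1)^n/n$) uses. There the paper needs Abel's test to get convergence of each coefficient $d_m=\sum_{n\ge0} a_{n+m}/\bigl((n+1)\cdots(n+m)\bigr)$ from the merely conditional convergence of $\sum a_n$, and then Lemma \ref{factlemm} ($\sum_{n\ge1}(n+1)^{-m}\le 1/(m-1)$) to conclude $\{d_m\}\in \ell^2$. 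Your parenthetical telescoping remark, based on $(m-1)\,n!/(n+m)! = n!/(n+m-1)! - (n+1)!/(n+m)!$, gives $\sum_{n\ge0} n!/(n+m)! = 1/\bigl((m-1)\,(m-1)!\bigr)$, which is actually a sharper bound than the paper's lemma; but as an absolute estimate it only works for $m\ge2$ (for $m=1$ the weights $1/(n+1)$ are not summable, so convergence of $d_1$ genuinely needs the Abel/Dirichlet argument), so if you intended the converse you would still have to add that step. For the implication actually stated, your proof stands as is.
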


\begin{lemma}\label{factlemm}The sequence $\{ c_m = \sum_{n=1}^\infty (n+1)^{-m} \}_{m=2}^\infty$ is an $l^2$ sequence.\end{lemma}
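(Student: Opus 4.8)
The plan is to show that each $c_m$ decays geometrically in $m$, which makes summability of the squares immediate. First I would rewrite $c_m = \sum_{n=1}^\infty (n+1)^{-m} = \sum_{k=2}^\infty k^{-m}$ (so that $c_m = \zeta(m) - 1$, though we will not need the zeta function explicitly), and then factor out the dominant first term: for $m \ge 2$,
$$c_m = \sum_{k=2}^\infty k^{-m} = 2^{-m}\sum_{k=2}^\infty \left(\tfrac{k}{2}\right)^{-m} \le 2^{-m}\sum_{k=2}^\infty \left(\tfrac{k}{2}\right)^{-2} = 4\cdot 2^{-m}\sum_{k=2}^\infty k^{-2},$$
using that $(k/2)^{-m} \le (k/2)^{-2}$ whenever $k/2 \ge 1$ and $m \ge 2$. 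Writing $C = 4\sum_{k=2}^\infty k^{-2} = 4(\pi^2/6 - 1) < \infty$, this gives $0 \le c_m \le C\, 2^{-m}$.

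From here the conclusion follows at once: $c_m^2 \le C^2 4^{-m}$, hence
$$\sum_{m=2}^\infty c_m^2 \le C^2 \sum_{m=2}^\infty 4^{-m} = \frac{C^2}{12} < \infty,$$
so $\{c_m\}_{m=2}^\infty \in \ell^2$. (One could even sharpen the first-term estimate to $c_m \le 2^{1-m}$ via $\sum_{k\ge 2}k^{-m} \le 2^{-m} + \int_2^\infty x^{-m}\,dx$, but the crude bound above already suffices.)

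I do not anticipate a genuine obstacle here; the only substantive point is recognizing that the tail $\sum_{k\ge 2}k^{-m}$ is governed by its leading term $2^{-m}$ uniformly in $m \ge 2$, after which the $\ell^2$ property is a comparison with a convergent geometric series. The lemma will be used in Theorem \ref{facttoep} precisely because this geometric decay is what allows the relevant coefficient sequences associated with the factorial-weighted Toeplitz matrix to land in the correct space.
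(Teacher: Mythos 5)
Your proof is correct, but it takes a different route from the paper. The paper bounds each term by the integral comparison $c_m \le \int_0^\infty (1+x)^{-m}\,dx = \frac{1}{m-1}$ and then observes that $\{1/(m-1)\}$ is square-summable; you instead extract the leading term of the tail and get the geometric bound $c_m \le C\,2^{-m}$, then compare $\sum c_m^2$ with a geometric series. Your estimate is sharper (exponential decay in $m$ rather than $O(1/m)$) and completely elementary, with no integral test needed for the $m$-dependence; the paper's argument is shorter, needing only a single one-line integral bound, and the weaker rate $1/(m-1)$ already suffices since the lemma only asserts membership in $\ell^2$. One small caveat on your parenthetical remark: the claimed sharpening $c_m \le 2^{1-m}$ fails at $m=2$, since $c_2 = \pi^2/6 - 1 > 1/2$ (your own estimate $2^{-m} + \int_2^\infty x^{-m}\,dx$ gives $2^{-m} + \tfrac{2^{1-m}}{m-1}$, which is only $\le 2^{1-m}$ for $m \ge 3$); this does not affect your main argument, which stands as written.
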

\begin{proof}[Proof of Lemma]Each term of the sequence can be bounded by $$\int_{0}^\infty \frac{1}{(1+x)^m} dx = \frac1{m-1}.$$ Thus, $c_m$ is bounded by an $l^2$ sequence, and so it is also $l^2$.\end{proof}

\begin{proof}[Proof of Theorem \ref{facttoep}]
First suppose that $f \in D(T)$. By definition, the zero-th coefficient of $Tf$ is given by $\sum_{n=0}^\infty n! \hat f(n)$, which must be a convergent series. Declaring $a_n = n! \hat f(n)$, it can be seen that $\sum a_n$ converges. Moreover, since $f(z) = \sum_{n=0}^\infty a_n \frac{z^n}{n!}$, the function $f$ must be an entire function.

For the other direction, suppose that $f(z) = \sum_{n=0}^\infty a_n \frac{z^n}{n!}$ where $\sum a_n$ converges. Define $d_0 = \sum_{n=0}^\infty n! \hat f(n) = \sum_{n=0}^\infty a_n$. Note that since $a_n$ converges so does $\sum_{n=0}^\infty a_n b_n$ for any positive monotonically decreasing sequence $\{ b_n \}$. Thus for each $m = 1, 2, ...$ the series $$d_m = \sum_{n=0}^\infty n! \hat f(n+m) = \sum_{n=0}^\infty \frac{a_{n+m}}{(n+1)(n+2)\cdots(n+m)}$$ converges. This enables us to define $Tf$ formally as $\sum_{m=0}^\infty d_m z^m$.

In order to demonstrate that $d_m$ is in $l^2$, write $d_m$ as follows: $$d_m = \frac{a_m}{m!} + \sum_{n=1}^\infty \frac{a_{n+m}}{(n+1)(n+2)\cdots(n+m)} := s_m + t_m.$$

The sequence $\{ s_m \} \in l^2$ since $a_m \to 0$. The sequence $\{ t_m \}$ is bounded by  $\sum_{n=1}^\infty (n+1)^m = c_m$ for sufficiently large $m$, since $|a_m| < 1$ for $m$ sufficiently large. By Lemma \ref{factlemm}, $t_m$ is in $l^2$. This completes the proof of the theorem.
\end{proof}

\begin{corollary}The domain of the operator given in Theorem \ref{facttoep} is not shift invariant.\end{corollary}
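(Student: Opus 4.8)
The plan is to exhibit a single function $f \in D(T)$ such that $zf \notin D(T)$, using the explicit description of $D(T)$ from Theorem~\ref{facttoep}. Recall that $D(T)$ consists of the entire functions $f(z) = \sum_{n=0}^\infty a_n \frac{z^n}{n!}$ for which $\sum_{n=0}^\infty a_n$ converges. First I would observe how the shift acts on these coefficient sequences: if $f(z) = \sum_{n=0}^\infty a_n \frac{z^n}{n!}$, then $zf(z) = \sum_{n=0}^\infty a_n \frac{z^{n+1}}{n!} = \sum_{m=1}^\infty a_{m-1} \frac{z^m}{(m-1)!} = \sum_{m=1}^\infty (m\, a_{m-1}) \frac{z^m}{m!}$. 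So if we write $zf(z) = \sum_{m=0}^\infty a'_m \frac{z^m}{m!}$, then $a'_0 = 0$ and $a'_m = m\, a_{m-1}$ for $m \geq 1$. By the theorem, $zf \in D(T)$ if and only if $zf$ is entire (automatic, since $f$ is entire) and $\sum_{m=0}^\infty a'_m = \sum_{m=1}^\infty m\, a_{m-1}$ converges.

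The key step is then a choice of $\{a_n\}$ with $\sum a_n$ convergent but $\sum m\, a_{m-1} = \sum_{n \geq 0} (n+1) a_n$ divergent. This is the standard phenomenon that multiplication by $n$ destroys convergence: take, for instance, $a_n = \frac{(-1)^n}{n+1}$, so that $\sum_{n=0}^\infty a_n = \sum_{n=0}^\infty \frac{(-1)^n}{n+1}$ converges (alternating series), while $\sum_{n=0}^\infty (n+1) a_n = \sum_{n=0}^\infty (-1)^n$ diverges. With this sequence the function $f(z) = \sum_{n=0}^\infty \frac{(-1)^n}{(n+1)\, n!}\, z^n$ is entire (the coefficients decay faster than $1/n!$), hence $f \in D(T)$ by Theorem~\ref{facttoep}, but $zf \notin D(T)$ since the zero-th Taylor coefficient of $T(zf)$, namely $\sum_{m} a'_m = \sum_{n\geq 0}(n+1)a_n$, fails to converge. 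This produces an explicit witness to the non-invariance of $D(T)$ under $S = M_z$.

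I do not anticipate a serious obstacle here; the work is essentially bookkeeping with Taylor coefficients, and the only thing to be careful about is confirming that $f$ genuinely lies in $D(T)$ (entirety plus convergence of $\sum a_n$, both of which are immediate for the chosen sequence) and that the divergence criterion from Theorem~\ref{facttoep} is being applied to the right series, i.e. to the coefficient sequence of $zf$ rewritten in the basis $\{z^m/m!\}$. One could alternatively phrase the argument without choosing a specific $f$: since the map $\{a_n\} \mapsto \{(n+1)a_n\}$ does not preserve the space of convergent series, $D(T)$ cannot be shift invariant — but giving the concrete example of $a_n = (-1)^n/(n+1)$ makes the corollary self-contained and verifiable, so that is the route I would take.
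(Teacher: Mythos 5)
Your proof is correct and follows essentially the same route as the paper: exhibit an explicit $f$ whose coefficients in the $z^n/n!$ basis form a conditionally convergent alternating series, then note that shifting multiplies the coefficients by roughly $n$, producing a series whose terms do not tend to zero, so $zf \notin D(T)$ by the characterization in Theorem~\ref{facttoep}. The paper uses $a_n = (-1)^n/n$ where you use $a_n = (-1)^n/(n+1)$, a purely cosmetic difference.
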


\begin{proof}The function $f(z) = \sum_{n=1}^\infty \frac{(-1)^n}{n} \frac{z^n}{n!} \in D(T)$, since $\sum_{n=1}^\infty (-1)^n/n$ converges. Now consider the function $$zf(z) = \sum_{n=1}^\infty \frac{(-1)^n}{n} \frac{z^{n+1}}{n!} = \sum_{n=2}^\infty \frac{(-1)^{n-1}}{n-1} \frac{z^{n}}{(n-1)!} = \sum_{n=2}^\infty \frac{(-1)^{n-1}n}{n-1} \frac{z^{n}}{n!}.$$ The series $\sum_{n=2}^\infty (-1)^{n-1} \frac{n}{n-1}$ does not converge, which means $zf(z) \not\in D(T)$.
\end{proof}

By applying the same techniques used in proving Theorem \ref{facttoep}, a slightly weaker result can be found when $n!$ is replaced by a sequence of complex numbers $\{ \gamma_n \}$ with the growth condition $|\gamma_{n+1}| > (n+1) |\gamma_n|$.

\begin{theorem}Let $\{\gamma_n\}$ be a sequence of complex numbers as described above, and define the operator $Tf = \sum_{m=0}^\infty \left( \sum_{n=0}^\infty \gamma_n \hat f (n+m) \right) z^m$ with the domain $D(T)=\{ f \in H^2 : Tf \in H^2 \}$. The operator $T$ is densely defined and functions of the form $f(z) = \sum_{n=0}^\infty a_n \frac{z^n}{\gamma_n}$ where $\sum |a_n| <\infty$ are in its domain.\end{theorem}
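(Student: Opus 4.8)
The plan is to imitate the proof of Theorem~\ref{facttoep}, letting the growth hypothesis $|\gamma_{n+1}|>(n+1)|\gamma_n|$ play the role of the exact identity $(n+1)!=(n+1)\cdot n!$. Two preliminary remarks: the recursion forces $\gamma_n\neq 0$ for every $n\geq 1$ (and we may assume $\gamma_0\neq 0$, or else the statement refers only to the tail $\sum_{n\geq 1}a_nz^n/\gamma_n$), and iterating it gives $|\gamma_n|>n!\,|\gamma_0|$, so $1/|\gamma_n|$ decays very fast. Because the recursion now yields an inequality rather than an equality, the estimates will be carried out with absolute values throughout; with that single adjustment the bounds from the factorial case apply essentially verbatim.

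For density of $D(T)$ I would argue directly: $\widehat{z^{k}}(j)=\delta_{jk}$ gives $Tz^{k}=\sum_{m=0}^{k}\gamma_{k-m}z^{m}$, a polynomial and hence an element of $H^2$, so every monomial, and hence every polynomial, lies in $D(T)$; since the polynomials are dense in $H^2$, so is $D(T)$. (This also follows from the second assertion once proved, applied to $f=z^n/\gamma_n$.)

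Now fix $f(z)=\sum_{n=0}^{\infty}a_nz^n/\gamma_n$ with $\sum_n|a_n|<\infty$. Since $\{a_n\}$ is bounded and $1/|\gamma_n|<1/(n!\,|\gamma_0|)$, this series converges to an entire function lying in $H^2$, with $\hat f(n)=a_n/\gamma_n$; thus the $m$-th Taylor coefficient of $Tf$ is $d_m=\sum_{n=0}^{\infty}\gamma_n a_{n+m}/\gamma_{n+m}$. Iterating the hypothesis gives $|\gamma_{n+m}|>(n+1)(n+2)\cdots(n+m)\,|\gamma_n|$, whence
$$|d_m|\leq\sum_{n=0}^{\infty}\frac{|a_{n+m}|}{(n+1)(n+2)\cdots(n+m)},$$
which for $m\geq 1$ is precisely the sum bounded in the proof of Theorem~\ref{facttoep} (with $|a_{n+m}|$ in place of $a_{n+m}$); in particular each series defining $d_m$ converges absolutely, while $d_0=\sum_n a_n$ converges by hypothesis. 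Splitting $|d_m|\leq |a_m|/m!+\sum_{n\geq 1}|a_{n+m}|/\bigl[(n+1)\cdots(n+m)\bigr]=:s_m+t_m$, the sequence $\{s_m\}$ is in $l^2$ because $\{a_m\}$ is bounded and $\{1/m!\}\in l^2$, and $\{t_m\}$ coincides, up to finitely many finite terms, with a sequence dominated by $(\sup_k|a_k|)\,c_m$, $c_m=\sum_{n\geq 1}(n+1)^{-m}$, which lies in $l^2$ by Lemma~\ref{factlemm}. Hence $\{d_m\}$ is dominated by an $l^2$ sequence, so $Tf\in H^2$ and $f\in D(T)$.

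I do not anticipate a genuine obstacle; the argument is essentially a re-run of Theorem~\ref{facttoep} and Lemma~\ref{factlemm}. The only delicate point is that $|\gamma_{n+1}|>(n+1)|\gamma_n|$ controls only the modulus of the ratio $\gamma_n/\gamma_{n+m}$, not its argument, so the Abel/Dirichlet summation that accommodated a merely convergent $\sum a_n$ in Theorem~\ref{facttoep} is unavailable here. This is why one hypothesizes $\sum_n|a_n|<\infty$, which forces absolute convergence of each $d_m$ irrespective of the phases of the $\gamma_n$, and why the conclusion is correspondingly weaker: one obtains only the inclusion $\{\sum_n a_nz^n/\gamma_n:\sum_n|a_n|<\infty\}\subseteq D(T)$ rather than an identification of the full domain.
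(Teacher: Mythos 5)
Your proof is correct and follows exactly the route the paper intends: the paper gives no explicit proof of this theorem, saying only that it follows ``by applying the same techniques used in proving Theorem~\ref{facttoep},'' and your argument carries out precisely that adaptation, with the iterated growth bound $|\gamma_{n+m}|>(n+1)\cdots(n+m)|\gamma_n|$ replacing the factorial identity and Lemma~\ref{factlemm} doing the same work in the $l^2$ estimate. Your closing observation --- that the complex phases of the $\gamma_n$ make the Abel/Dirichlet summation of the factorial case unavailable, which is why one assumes $\sum|a_n|<\infty$ and obtains only an inclusion rather than a characterization of $D(T)$ --- is exactly the reason the paper describes this result as ``slightly weaker.''
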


\section{Conclusion}

This paper aims to give a partial answer to a question posed by Donald Sarason in \cite{sarason2008}. At the end of his paper, Sarason asked if a closed densely defined operator satisfying certain algebraic properties analogous to that of a bounded Toeplitz operator is determined by a symbol in some sense. We call these operators Sarason-Toeplitz operators.

The Sarason sub-symbol was presented as a family of potential symbol maps for Sarason-Toeplitz operators. For bounded and analytic densely defined Toeplitz operators, the Sarason sub-symbol is unique and characterizes the operators. In the case of a coanalytic densely defined Toeplitz operator, the Sarason sub-symbol produces a densely defined multiplication type Toeplitz operator that agrees with the original coanalytic Toeplitz operator on a restricted domain. Finally, these results were extended to a broader class of Sarason-Toeplitz operators, provided their domains contain functions that are ratios of $L^2$ functions and $H^2$ outer funtions.

Section 6 focused on a densely defined coanalytic Toeplitz matrix. The domain of an extension of the matrix was completely classified, and it was shown that the densely defined operator is not of Sarason-Toeplitz type. This demonstrates that the definition of a Toeplitz matrix and that of a Sarason-Toeplitz operator do not coincide.

For general Sarason-Toeplitz operators $T$ the density of the set $D_2(T)$ is unknown. It is also unknown if this set contains any nontrivial elements. This is a question to be addressed in future research, and presumably, if it is to be proven true, the closedness of $T$ must be leveraged.

\bibliographystyle{elsarticle-num}
\bibliography{toeplitzbibliography}
\end{document}